\newcommand\nthalias[1]{\AddToHook{env/#1/begin}{\crefalias{lemma}{#1}}}
\crefname{section}{Section}{Sections}
\crefname{subsection}{\S}{\S\S}
\crefname{subsubsection}{\S}{\S\S}
\theoremstyle{plain}
\newtheorem{lemma}{Lemma}[section]
\newtheorem{theorem}[lemma]{Theorem}
\theoremstyle{plain}
\theoremstyle{plain}
\newtheorem{remark}[lemma]{Remark}
\crefname{definition}{definition}{definitions}
\crefname{ex}{example}{examples}
\crefname{exs}{example}{examples}
\crefname{remark}{remark}{remarks}
\crefname{remarks}{remark}{remarks}
\crefname{convention}{convention}{conventions}
\crefname{notation}{notation}{notations}
\crefname{table}{table}{tables}
\crefname{lemma}{lemma}{lemmas}
\crefname{proposition}{proposition}{propositions}
\crefname{propositionN}{proposition}{propositions}
\crefname{corollary}{corollary}{corollaries}
\crefname{corollaryN}{corollary}{corollaries}
\crefname{theorem}{theorem}{theorems}
\crefname{theoremN}{theorem}{theorems}
\crefname{enumi}{}{}
\crefname{assumption}{assumption}{Assumptions}
\crefname{construction}{construction}{Constructions}
\crefname{question}{question}{Questions}
\crefname{equation}{}{}
\numberwithin{equation}{section}
\theoremstyle{nonumberplain}
\newtheorem{proof}{Proof}
\newcommand\pf[1]{\newtheorem{#1}{Proof of \Cref{#1}}}
\newcommand\cC{{\mathcal C}}
\newcommand\cD{{\mathcal D}}
\newcommand\cM{{\mathcal M}}
\newcommand\cO{{\mathcal O}}
\newcommand\fM{{\mathfrak M}}
\newcommand\fa{{\mathfrak a}}
\newcommand\fm{{\mathfrak m}}
\newcommand\fp{{\mathfrak p}}
\DeclareMathOperator{\id}{id}
\DeclareMathOperator{\rep}{\mathrm{Rep}}
\DeclareMathOperator{\supp}{\mathrm{supp}}
\DeclareMathOperator{\spec}{\mathrm{Spec}}
\newcommand{\define}[1]{{\em #1}}
\newcommand\spr[1]{\cite[\href{https://stacks.math.columbia.edu/tag/#1}{Tag {#1}}]{stacks-project}}
\renewcommand{\square}{\mathrel{\Box}}
\newcommand{\xrightarrowdbl}[2][]{%
  \xrightarrow[#1]{#2}\mathrel{\mkern-14mu}\rightarrow
}
\title{Finite-dimensional modules over associative equivariant map algebras}
\author{Alexandru Chirvasitu}
\begin{document}

\date{}

\newcommand{\Addresses}{{
  \bigskip
  \footnotesize

  \textsc{Department of Mathematics, University at Buffalo}
  \par\nopagebreak
  \textsc{Buffalo, NY 14260-2900, USA}  
  \par\nopagebreak
  \textit{E-mail address}: \texttt{achirvas@buffalo.edu}


}}

\maketitle

\begin{abstract}
  Let $X$ and $\mathfrak{a}$ be an affine scheme and (respectively) a finite-dimensional associative algebra over an algebraically-closed field $\Bbbk$, both equipped with actions by a linearly-reductive linear algebraic group $G$. We describe the simple finite-dimensional modules over the algebra of $G$-equivariant maps $X\to \mathfrak{a}$ in terms of the representation theory of the fixed-point subalgebras $\mathfrak{a}^x:=\mathfrak{a}^{G_x}\le \mathfrak{a}$, $G_x$ being the respective isotropy groups of closed-orbit $k$-points $x\in X$. This answers a question of E. Neher and A. Savage, extending an analogous result for (also linearly-reductive) finite-group actions. Moreover, the full category of finite-dimensional modules admits a direct-sum decomposition indexed by closed orbits. 
\end{abstract}

\noindent \emph{Key words:
  algebraic group;
  closed orbit;
  comodule;
  cosemisimple;
  descent;
  direct sum of categories;
  equivariant map algebra;
  linearly reductive
}

\vspace{.5cm}

\noindent{MSC 2020: 14L17; 14L30; 16D60; 16T05; 18M05; 16T15; 14A15; 18C40
  
}


\section*{Introduction}

The note is motivated by a number of questions raised in \cite[\S 4.1]{NehSav13} in the process of studying the \emph{equivariant map algebras} that form the object of \cite{NehSavSen12} and feature in various guises much other literature: \cite{MR3317800,MR3484367,MR3205779} and their references, for instance. The setup, briefly, is as follows.

\begin{itemize}[wide]
\item Working throughout over an algebraically closed field $\Bbbk$ of arbitrary characteristic, consider a fixed commutative algebra $A$ with associated affine scheme $X:=\spec(A)$.

\item $\fa$ is a finite-dimensional algebra in the general sense of that term (at this early stage in the discussion): vector space equipped with a number of tensors satisfying a number of equational constraints ($\fa$ will be unital associative in the present work, and is mostly a Lie algebra in much of the work cited above).

\item $A$ and $\fa$ are both acted upon by a \emph{linear algebraic group} \cite[Remark 4.11]{mln_alg-gp_2017} $G$ (assumed finite in the cited sources but not here), mostly assumed \emph{linearly reductive} (\cite[Definition 12.52]{mln_alg-gp_2017}, \cite[\S 1.1, Definition 1.4]{fkm}) below.

\item The main object of study is the fixed-point subalgebra $\fM=(A\otimes\fa)^G\le A\otimes \fa$, i.e. the algebra of $G$-equivariant regular maps $X\to\fa$ (hence the name: equivariant map algebra). ``Object of study'' is understood representation-theoretically in much of the literature: classifying/describing appropriate classes of modules over $\fM$, whatever the phrase ``module'' might mean (depending on the structure $\fa$: Lie, associative, etc.). 
\end{itemize}

The material preceding it having focused on finite $G$ (regarded as a finite scheme with the set underlying $G$ as that of closed points) of order coprime to $\mathrm{char}\;\Bbbk$, \cite[Problem 4.1(a)]{NehSav13} proposes extending the discussion to broader classes of algebraic groups. This (for associative unital $fa$) is the focus of the present note. 

The module category over an algebra $B$ is denoted by ${_B}\cM$, while $\cM^C$ stands for the category of comodules over a coalgebra $C$. An extra `$f$' subscript, as in ${_B}\cM_f$, indicates finite-dimensional objects. All module structures are on the left and all comodule structures on the right, unless explicitly amended. For a point $x\in X(\Bbbk)$ we write $G_x\le G$ for its \emph{isotropy group} \cite[post Proposition 7.5]{mln_alg-gp_2017} and $\fa^x\le \fa$ for the subalgebra fixed by $G_x$. As explained e.g. in \cite[Definition 1.6]{NehSav13}, we have an evaluation map $\fM\to\fa^x$ obtained, as the name suggests, simply by evaluating a $G$-equivariant map $X\to\fa$ at $x\in X$. Simple finite-dimensional $\fM$-modules, then, are classifiable as perhaps expected (in a statement generalizing its finite-group counterpart \cite[Theorem 2.1]{NehSav13}). 

This notation in place, the classification of simple irreducible $\fM$-modules reads as follows. 

\begin{theorem}\label{th:main}
  Let $G$ be a smooth linearly-reductive linear algebraic group acting on an affine scheme $X=\spec(A)$ as well as a finite-dimensional unital associative algebra $\fa$.
  
  If $x\in X$ ranges over a set containing exactly one element in every closed $G$-orbit, the functor
  \begin{equation}\label{eq:main}
    \bigoplus_x {_{\fa^x}}\cM_f\longrightarrow {_\fM}\cM_f
  \end{equation}
  induces a bijection between isomorphism classes of simple modules.
\end{theorem}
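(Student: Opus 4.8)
The plan is to reduce the classification of simple $\fM$-modules to a pointwise problem via the geometry of the $G$-action, and then to invoke linear reductivity to pass between $\fM$-modules and modules over the fixed-point subalgebras $\fa^x$. First I would analyze the structure of $X$ and of $A=\cO(X)$ as a $G$-algebra: since $G$ is linearly reductive, $A$ is a cosemisimple $\cO(G)$-comodule, and every closed orbit $\ol{Gx}\subseteq X$ contributes a quotient $A\twoheadrightarrow \cO(Gx)$. A simple finite-dimensional $\fM$-module $V$ is in particular a module over the (commutative) image of $A$ in $\End_\Bbbk(V)$ through the structure map $A\otimes\fa\supseteq\fM\to\End(V)$ — more precisely, over the center or over the ``scalar part'' $A$ acting via $\fM$; because $V$ is finite-dimensional and simple, this forces the $A$-action to factor through a finite-dimensional quotient, hence through the local ring at some point, and $G$-equivariance together with simplicity forces the support to be a single closed orbit $Gx$. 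Thus $V$ is a module over $\fM_x := (\cO(Gx)\otimes\fa)^G$, and the first main step is the identification $\fM_x \cong$ a matrix-like algebra Morita-equivalent to $\fa^x$, or at least the statement that $\fM_x$ and $\fa^x$ have equivalent categories of finite-dimensional modules.

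The second step is the key algebraic input: for a single closed orbit $Gx \cong G/G_x$, I would prove $(\cO(G/G_x)\otimes\fa)^G \simeq \fa^{G_x}=\fa^x$ Morita-equivalently, via faithfully flat descent along $G\to G/G_x$ combined with linear reductivity of $G_x$ (which is inherited from $G$ because $G_x$ is the isotropy of a point in a closed orbit, hence reductive by Matsushima-type reasoning, or simply because the orbit is affine). Concretely, $\cO(G/G_x) = \cO(G)^{G_x}$, and one has $(\cO(G)\otimes\fa)^G \cong \fa$ by evaluating at the identity; taking further $G_x$-invariants on the source and comparing with the $G_x$-fixed points on $\fa$ gives the desired description after tracking through how $G_x$ acts. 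The categorical upshot is an equivalence ${_{\fM_x}}\cM_f \simeq {_{\fa^x}}\cM_f$, under which simples correspond to simples; composing with the restriction ${_\fM}\cM_f \to {_{\fM_x}}\cM_f$ produces the inverse to the functor in \eqref{eq:main} on each block.

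To organize this cleanly I would first establish that distinct closed orbits give ``orthogonal'' summands: if $V$ is supported on $Gx$ and $W$ on $Gy$ with $Gx\neq Gy$, then $\Hom_\fM(V,W)=0$ and $\Ext^1_\fM(V,W)=0$, because the $A$-actions factor through coprime idempotents in a suitable finite-dimensional quotient of $A$ (here linear reductivity of $G$ guarantees that the relevant finite quotient of $A$ decomposes as a product indexed by the closed orbits it meets). This simultaneously yields the block decomposition asserted in the last sentence of the abstract and shows the functor \eqref{eq:main} is fully faithful on the level needed. Then essential surjectivity on simples follows from the support argument: every simple is supported on exactly one closed orbit, hence comes from the corresponding summand.

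The main obstacle I anticipate is the descent/invariants computation in the second step — specifically, verifying that $(\cO(G/G_x)\otimes\fa)^G$ is Morita equivalent to $\fa^{G_x}$ rather than merely related to it, and doing so in arbitrary characteristic where one cannot average but must instead argue cohomologically using that $\cO(G)$ is an injective (equivalently, cosemisimple) $\cO(G_x)$-comodule. A secondary subtlety is the reduction step: pinning down precisely why the $A$-action on a finite-dimensional simple $\fM$-module factors through a single closed orbit, which requires knowing that the only $G$-stable finite-codimension ideals of $A$ correspond to closed subschemes whose $G$-orbits are closed (i.e., to finite unions of closed orbits), together with an irreducibility/indecomposability argument to isolate a single orbit. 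Everything else — transporting the equivalence, checking compatibility with \eqref{eq:main}, assembling the blocks — should be formal once these two points are in hand.
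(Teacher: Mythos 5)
Your overall strategy---first concentrate a finite-dimensional simple on a single closed orbit, then descend along $G\to G/G_x$ using affineness of the closed orbit together with linear reductivity of $G$ and of $G_x$---is the same as the paper's. Your second step is essentially the paper's \Cref{th:desc2} (Takeuchi's faithful-coflatness criterion for affine $G/G_x$) combined with \Cref{th:desc1}, and your identification $(\cO(G/G_x)\otimes\fa)^G\cong\fa^{G_x}$ via evaluation at $x$ is correct (it is an algebra isomorphism, not merely a Morita equivalence); the chain of equivalences in the paper produces the same conclusion in categorical form.

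The genuine gap is in your reduction step, and the mechanism you propose for it would fail. You treat $V$ as a module over (an image of) $A$ and argue that the action factors through a finite-dimensional quotient of $A$, with $G$-stable finite-codimension ideals corresponding to finite unions of closed orbits. First, $A$ does not act on $V$: only $A^G\otimes 1\subseteq\fM$ does, and finite-dimensional quotients of $A^G$ see points of $\spec(A^G)$, not orbits in $X$, so the translation between the two still has to be supplied. Second, for positive-dimensional $G$ a closed orbit $G/G_x$ is not finite, hence does not correspond to a finite-codimension ideal of $A$; the dichotomy ``$G$-stable finite-codimension ideals $=$ finite unions of closed orbits'' is an artifact of the finite-group case. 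The paper instead induces $V$ up to $V'=B\otimes_{B^G}V$ with $B=A\otimes\fa$ (a genuine $A$-module), uses \Cref{th:desc1} to see that $V'$ is simple as an equivariant $B$-module, shows its $A$-support is a minimal closed $G$-stable set (\Cref{le:supp1}), and then runs a limit argument over finitely generated $G$-subalgebras of $A$ (\Cref{le:orbit}) to show this minimal set is a closed orbit through a $\Bbbk$-point; for arbitrary (possibly non-Noetherian) $A$ this last passage is exactly where the work lies. Two smaller points: the $\Hom$/$\Ext^1$ orthogonality you invoke is the content of \Cref{th:direct_sum} and is not needed for the bijection on simples; and even after locating the support on $O_x$ you must still check that the action factors through $\fM\twoheadrightarrow(\cO(O_x)\otimes\fa)^G$, i.e.\ that the induced module is not merely supported on the orbit but killed by its ideal---the paper extracts this from simplicity and Nakayama.
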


The proof uses \emph{descent} for both modules and comodules (\Cref{th:desc1,th:desc2} below, easily recovered from broader Hopf-algebraic results): casting equivariant modules/comodules over a ``larger'' object (such as $B:=A\otimes \fa$ or the regular-function Hopf algebra $\cO(G)$) as non-equivariant modules/comodules over ``smaller'' corresponding objects (e.g. $B^G$ or $\cO(G_x)$ respectively). 

Arbitrary finite-dimensional $\fM$-modules, for that matter, ``specialize well'' in the sense of \Cref{th:direct_sum} below. For a $\Bbbk$-point $x\in X$ with closed orbit $O_x$, denote by $\cM_x$ the full subcategory of $\cM:={_\fM}\cM_f$ consisting of objects $M$ such that $B\otimes_{B^G}M$ is supported on $O_x$. 

\begin{theorem}\label{th:direct_sum}
  If $x\in X$ ranges over a set containing exactly one element in every closed $G$-orbit, the functor
  \begin{equation*}
    \bigoplus_x\cM_x\to \cM
  \end{equation*}
built out of the inclusions $\cM_x\to \cM$ is an equivalence. 
\end{theorem}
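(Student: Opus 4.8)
The plan is to reduce the statement about $\fM$-modules to a statement about $B$-modules that are $G$-equivariant, via descent (\Cref{th:desc1}), and then to use the geometry of the $G$-scheme $X$ — namely that $X$ decomposes, as far as closed orbits are concerned, into pieces that are ``independent'' of one another. Concretely: the functor $M\mapsto B\otimes_{B^G}M$ (with $B=A\otimes\fa$) together with its adjoint, the fixed-point functor $N\mapsto N^G$, gives an equivalence between $_{\fM}\cM_f$ and the category of finite-dimensional $G$-equivariant $B$-modules, by \Cref{th:desc1}. So it suffices to prove the analogous direct-sum decomposition for finite-dimensional $G$-equivariant $B$-modules, indexed by closed orbits, where $\cM_x$ corresponds to those equivariant $B$-modules whose underlying $B$-module (equivalently, whose support as a sheaf on $X\times_{\Spec\Bbbk}(\text{pt})$, i.e. on the affine scheme $\Spec$ of the center of $B$, which surjects onto $X$) is set-theoretically contained in the closed orbit $O_x$.

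First I would establish that every finite-dimensional $G$-equivariant $B$-module $N$ is, as a $G$-equivariant $A$-module (forgetting the $\fa$-action), supported on a finite $G$-stable closed subset $Z\subseteq X$; since $N$ is finite-dimensional, $Z$ is a finite set of closed points, hence a finite union of closed orbits $O_{x_1},\dots,O_{x_r}$ (each orbit of a closed point in a finite $G$-stable set is itself closed, being the orbit-closure). Second, I would show that the idempotent decomposition $1=e_1+\dots+e_r$ of the ring $\mathcal{O}(Z)=\prod_i \mathcal{O}(O_{x_i})$ — the $e_i$ being the $G$-invariant idempotents cutting out the individual orbits — acts on $N$ (through $A\to\mathcal{O}(Z)\to\End(N)$) and yields a $G$-equivariant, $B$-linear direct-sum decomposition $N=\bigoplus_i e_i N$ with $e_i N$ supported exactly on $O_{x_i}$. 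This uses that the $e_i$ are $G$-invariant (so multiplication by $e_i$ commutes with the equivariant structure) and central in $B$ (they lie in $A\otimes 1\subseteq Z(B)$ — this is where unitality and the tensor-product form of $B$ matter). Third, I would check that $\Hom$ between equivariant $B$-modules supported on distinct closed orbits vanishes (immediate: a $B$-linear map must respect the $e_i$-eigenspace decomposition, and an $A$-linear map kills anything supported off its own support), and that the assignment $N\mapsto(e_iN)_i$ together with $(N_i)_i\mapsto\bigoplus_i N_i$ are mutually inverse, giving the equivalence $\bigoplus_x(\text{equiv.\ }B\text{-mod supported on }O_x)\simeq(\text{all finite-dim.\ equiv.\ }B\text{-mod})$. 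Transporting back through the descent equivalence of \Cref{th:desc1}, and matching up the definitions of $\cM_x$ (which was phrased precisely so that $M\in\cM_x$ iff $B\otimes_{B^G}M$ is supported on $O_x$), completes the argument.

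The main obstacle I anticipate is the very first step — producing, for a finite-dimensional $G$-equivariant $B$-module $N$, the finite $G$-stable closed subscheme on which it is supported, and knowing that this breaks into finitely many \emph{closed} orbits each carrying a $G$-invariant idempotent. The subtlety is that $G$ need not be finite, so ``finitely many orbits'' is not automatic from finite-dimensionality of $N$ alone; one needs that the annihilator ideal of $N$ in $A$ is $G$-stable (clear, since the $G$-action on $N$ is semilinear over the $G$-action on $A$) and of finite colength, so that $\Spec(A/\mathrm{Ann}_A N)$ is a finite — hence $0$-dimensional, hence affine with semisimple coordinate ring after reduction — $G$-scheme; its closed points form a finite $G$-set, each orbit within which is closed in $X$ (a finite orbit in a scheme is always closed), and linear reductivity (or just the fact that the orbits are disjoint closed sets in an affine scheme) furnishes the orthogonal $G$-invariant idempotents $e_i$. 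A secondary technical point is to make sure the support condition used to define $\cM_x$ — ``$B\otimes_{B^G}M$ is supported on $O_x$'' — is interpreted as a support condition over $A$ (i.e.\ as an $A$-module, or equivalently as a sheaf on $X$), so that it matches the $e_i$-decomposition; once that is pinned down, the rest is formal.
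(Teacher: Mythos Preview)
Your approach has a genuine gap: the descent functor $M\mapsto B\otimes_{B^G}M$ of \Cref{th:desc1} does \emph{not} land in finite-dimensional $G$-equivariant $B$-modules when $G$ is positive-dimensional, and this is precisely the case the paper is after. For a concrete instance take $G=\bG_m$ acting on $X=\bG_m$ by translation and $\fa=\Bbbk$; then $B=A=\Bbbk[t,t^{-1}]$, $B^G=\Bbbk$, and for the one-dimensional $\fM$-module $M=\Bbbk$ one gets $B\otimes_{B^G}M=\Bbbk[t,t^{-1}]$, infinite-dimensional with $A$-support equal to the whole (positive-dimensional) orbit. So the chain ``$N$ finite-dimensional $\Rightarrow$ $\mathrm{Ann}_A(N)$ of finite colength $\Rightarrow$ $\supp_A(N)$ a finite set $\Rightarrow$ finitely many finite (hence closed) orbits $\Rightarrow$ idempotents'' collapses at the first link. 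The equivalence in \Cref{th:desc1} is with those equivariant $B$-modules $N$ satisfying $BN^G=N$ and $\dim N^G<\infty$, which is a very different condition from $\dim N<\infty$.

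The paper sidesteps the issue by never passing to $B$. It stays with $M$ as a module over $A^G$ (which \emph{is} finite-dimensional), uses \Cref{le:orbit} to know that each simple constituent lies in some $\cM_x$, and then shows directly that $\mathrm{Ext}^1_{B^G}(M,N)=0$ for simples $M,N$ supported on distinct closed orbits: pick $f\in A^G$ lying in the maximal ideal $\overline{y}$ but not in $\overline{x}$ (these are distinct points of $X/G$ by \Cref{re.disjoint_images}); then $f$ annihilates $N$ and acts invertibly on the finite-dimensional $M$, so it simultaneously kills and is invertible on $\mathrm{Ext}^1$, forcing it to vanish. Your idempotent idea can be salvaged if you run it downstairs in $A^G$ rather than in $A$: the support of $M$ in $\spec(A^G)$ is a finite set of closed points (images of closed orbits, by \Cref{le:orbit} applied to composition factors), and the primitive idempotents of $A^G/\mathrm{Ann}_{A^G}(M)$ split $M$ directly. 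That is essentially the content of the paper's Ext argument recast in your language, but it is not the argument you wrote.
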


\subsection*{Acknowledgments}

I am grateful for input on the literature from E. Neher and A. Savage. 

\section{Preliminaries}\label{se:prel}

\emph{Linear algebraic groups} are as in \cite[Remark 4.11]{mln_alg-gp_2017} (and hence synonymous to \emph{affine} algebraic groups): closed group subschemes $G\le \mathrm{GL}(n)$, neither reduced/smooth nor irreducible in general (by contrast to \cite[\S I.1, 1.1]{Bor91} say, where reduction is assumed). For the little general background and terminology needed here revolving around coalgebras, Hopf algebras, comodules and the like we refer the reader to \cite{abe,dnr,mont,rad,swe}, etc. \emph{$R$-points} on a scheme $Y$ are those belonging to $Y(R)$, when conflating $Y$ with its \emph{functor of points} (\spr{01J5}, \cite[\S 13.1]{Bor91}); this will apply mostly to $R:=\Bbbk$ (in which case it is not uncommon to also refer to these as \emph{$\Bbbk$-rational} points). 

We denote by $\cO(Y)$ the algebra of regular functions on a scheme $Y$. If $G\le \mathrm{GL}(n)$ is a linear algebraic group, then $\cO(G)$ is a Hopf algebra, and $G$-representations are $\cO(G)$-comodules; for this reason, we also write $\rep(G)$ for $\cM^{\cO(G)}$. Recall \cite[\S 1.2, Definition 1.4]{fkm} that $G$ is \emph{linearly reductive} if $\rep(G)$ is semisimple (or: the Hopf algebra $\cO(G)$ is \emph{cosemisimple} \cite[pre Theorem 3.1.5]{dnr}). Superscripts denote invariants:
\begin{equation*}
  \begin{aligned}
    \left(M\xrightarrow{\quad\rho\quad}M\otimes H\right)\in \cM^H
    \quad\left(\text{bialgebra }H\right)
    &\quad:\quad
      M^H
      :=
      \left\{m\in M\ :\ \rho(m)=m\otimes 1\right\}\\
    \left(M\xrightarrow{\quad\rho\quad}M\otimes \cO(G)\right)\in \rep(G)
    &\quad:\quad
      M^G
      :=
      M^{\cO(G)}
      =
      \left\{m\in M\ :\ \rho(m)=m\otimes 1\right\}
  \end{aligned}  
\end{equation*}

For \emph{monoidal categories} \cite[Definition 6.1.1]{brcx_hndbk-1} $(\cC,\otimes,\mathbf{1})$ and \emph{algebras in (or internal to)} $\cC$ \cite[Definition 7.8.1]{egno} we denote by $_{B}\cC$ the category of \emph{$B$-modules in $\cC$} \cite[Definition 7.8.5]{egno}: objects $M\in \cC$ equipped with $\cC$-morphisms $B\otimes M\to M$ unital and associative in the obvious sense. This applies in particular to $\cC:=\cM^H$ for Hopf algebras (or bialgebras $H$), so also to $\cC:=\rep(G)$. 


The theory of algebraic-group \emph{orbits} is developed in \cite[\S 7.c]{mln_alg-gp_2017} (as in \cite[\S 5.3]{dg_gp-alg_1970}, \cite[\S I.1, 1.7]{Bor91}, etc.) in the context of actions on \emph{algebraic} schemes, i.e. \cite[pre \S 1.a]{mln_alg-gp_2017} those of finite type over the ground field. In that setup, regarding a $\Bbbk$-point $x$ as a morphism $\mathrm{Spec}(\Bbbk)\to X$, the orbit $O_x$ is defined as the image of the map 
\begin{equation}\label{eq:orbit}
  \begin{tikzpicture}[auto,baseline=(current  bounding  box.center)]
    \node (1) at (0,0) {$G\cong G\times\mathrm{Spec}(\Bbbk)$};
    \node (2) at (4,0) {$G\times X$};
    \node (3) at (7,0) {$X.$};
    \draw[->] (1) to node {$\id_G\times x$} (2);
    \draw[->] (2) to node {} (3);
  \end{tikzpicture}
\end{equation}  
It is a priori a topological subspace of $X$, but turns out \cite[Proposition 1.65]{mln_alg-gp_2017} to be \emph{locally closed} (i.e. open in its closure); this gives $O_x$ a natural reduced scheme structure. Furthermore, for smooth $G$ and finite-type separated $X$ there is \cite[Proposition 7.17]{mln_alg-gp_2017} an identification
\begin{equation*}
  G/G_x
  \xrightarrow[\quad\cong\quad]{\quad}
  O_x
  \lhook\joinrel\xrightarrow[\quad\text{immersion}\quad]{\quad}
  X
\end{equation*}
with the quotient of $G$ by the \emph{isotropy group} \cite[post Proposition 7.5]{mln_alg-gp_2017} of $x$. This suffices to extend the discussion to possibly-non-algebraic affine $X$, assuming $G$ smooth (equivalently \cite[Proposition 1.26]{mln_alg-gp_2017}, reduced; this is the case we will be interested in):
\begin{itemize}[wide]
\item Write $X=\varprojlim_i X_i$ as a \emph{cofiltered limit} \spr{04AY} of finite-type affine $G$-$\Bbbk$-schemes, dual to the exhaustion $A=\varinjlim_i A_i$ by finitely-generated $G$-subalgebras. This is a limit in the category of $\Bbbk$-schemes, but also that of sets and/or topological spaces: \cite[Tags \href{https://stacks.math.columbia.edu/tag/0CUE}{0CUE} and \href{https://stacks.math.columbia.edu/tag/0CUF}{0CUF}]{stacks-project}. 

\item Writing $x_i$ for the image of $x$ through $X\to X_i$, observe that $G_{x_i}$ stabilizes to $G_x\le G$ for large $i$ by the descending chain condition \cite[Corollary 1.42]{mln_alg-gp_2017} on algebraic subgroups.

\item Limiting over $i$ we obtain a morphism $G/G_x\to X$, which we refer to as the orbit $O_x$.

\item If moreover the orbits $O_{x_i}\subseteq X$ are closed for large $x$ then said morphism is a closed immersion \spr{0CUH}, so the orbit will be a closed subscheme of $X$. This is what is meant below by requiring that $x\in X(\Bbbk)$ have closed orbit. 
\end{itemize}

I will use the following descent results where (as not unusual in category-theoretic literature \cite[Definition 19.3]{ahs}) the tail of the symbol `$\bot$' points towards the left hand of an \emph{adjunction}. 

\begin{theorem}\label{th:desc1}
  \begin{enumerate}[(1),wide]
  \item\label{item:th:desc1:hopf} For a cosemisimple Hopf algebra $H$ be and an algebra $B\in \cM^H$ an algebra
    \begin{equation}\label{eq:desc1.hopf}
      \begin{tikzpicture}[auto,baseline=(current  bounding  box.center)]
        \node (1) at (0,0) {$_{B^H}\cM$};
        \node (bot) at (3,0) {$\bot$};
        \node[text width=4cm] (2) at (8,0) {objects $M\in {_B}\cM^H$ such that $BM^H=M$};
        \draw[->] (1) .. controls (2,1) and (4,1) .. node{$B\otimes_{B^H}\bullet$} (2);
        \draw[<-] (1) .. controls (2,-1) and (4,-1) .. node[below] {$(\bullet)^H$} (2);
      \end{tikzpicture}
    \end{equation}
    
  \item\label{item:th:desc1:gp} In particular if $G$ is a linearly-reductive linear algebraic group and $A\in \rep(G)$ an algebra then
    \begin{equation}\label{eq:desc1}
      \begin{tikzpicture}[auto,baseline=(current  bounding  box.center)]
        \node (1) at (0,0) {$_{B^G}\cM$};
        \node (bot) at (3,0) {$\bot$};
        \node[text width=4cm] (2) at (8,0) {objects $M\in {_B}\rep(G)$ such that $BM^G=M$};
        \draw[->] (1) .. controls (2,1) and (4,1) .. node{$B\otimes_{B^G}\bullet$} (2);
        \draw[<-] (1) .. controls (2,-1) and (4,-1) .. node[below] {$(\bullet)^G$} (2);
      \end{tikzpicture}
    \end{equation}
    is an equivalence of categories.   
  \end{enumerate}
\end{theorem}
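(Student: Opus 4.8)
The group statement \Cref{item:th:desc1:gp} is the special case $H=\cO(G)$ of \Cref{item:th:desc1:hopf} (using $\cM^{\cO(G)}=\rep(G)$, $M^{\cO(G)}=M^G$, and the fact that $\cO(G)$ is cosemisimple exactly when $G$ is linearly reductive), so I would prove only the Hopf-algebraic version. The adjunction itself requires nothing about $H$ beyond being a Hopf algebra: for a $B^H$-module $N$, equip $B\otimes_{B^H}N$ with the comodule structure $b\otimes n\mapsto b_{(0)}\otimes n\otimes b_{(1)}$ induced from $B$ alone, which is well defined over $B^H$ because $B^H$ consists of coinvariants, and with left multiplication by $B$; this lands in ${_B}\cM^H$. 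Dually, for $M\in {_B}\cM^H$ the subspace $M^H$ is a $B^H$-submodule, since the action $B\otimes M\to M$ is colinear. The unit is $\eta_N\colon N\to (B\otimes_{B^H}N)^H$, $n\mapsto 1\otimes n$, and the counit is $\varepsilon_M\colon B\otimes_{B^H}M^H\to M$, $b\otimes m\mapsto bm$; checking that these are well defined, suitably (co)linear, and satisfy the triangle identities is a routine verification that yields the adjunction as stated.

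Cosemisimplicity enters through the normalized two-sided integral $\phi\in H^*$, $\phi(1)=1$. The map $E_M\colon M\to M$, $m\mapsto m_{(0)}\phi(m_{(1)})$, is a natural projection of $M$ onto $M^H$, and when $M\in{_B}\cM^H$ it is $B^H$-linear (again because $B^H$ is coinvariant); in particular $E_B\colon B\to B^H$ is a $B^H$-bimodule retraction of the inclusion, and, more abstractly, the functor $(-)^H$ is exact. I would then show $\eta_N$ is an isomorphism for every $N$: the formula $b\otimes n\mapsto E_B(b)n$ defines a $B^H$-linear map $B\otimes_{B^H}N\to N$ (well definedness uses right $B^H$-linearity of $E_B$) splitting $\eta_N$, while the computation $E(b\otimes n)=E_B(b)\otimes n=1\otimes E_B(b)n$ identifies $(B\otimes_{B^H}N)^H$ with $\{1\otimes n:n\in N\}=\eta_N(N)$; hence $\eta_N$ is simultaneously a split monomorphism and a surjection onto the coinvariants, so it is invertible. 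Consequently $B\otimes_{B^H}(-)$ is fully faithful, the adjunction exhibits ${_{B^H}}\cM$ as a coreflective subcategory of ${_B}\cM^H$, and it restricts to an equivalence of ${_{B^H}}\cM$ with the full subcategory of those $M$ for which $\varepsilon_M$ is an isomorphism, with quasi-inverse $(-)^H$.

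What remains is to match this essential image with the subcategory named in the statement. Since the image of $\varepsilon_M$ is exactly the $B$-submodule $BM^H\subseteq M$, one inclusion is immediate: $\varepsilon_M$ invertible forces $BM^H=M$. The reverse inclusion — promoting "$\varepsilon_M$ is surjective", i.e.\ $BM^H=M$, to "$\varepsilon_M$ is bijective" — is the one genuinely substantive point, and is where I would draw on cosemisimplicity beyond the averaging computation above: combining the exactness of $(-)^H$ (equivalently, the natural retraction $E$) with the $B$-module structure on $\ker\varepsilon_M$, or, more efficiently, simply citing the structure theory of relative Hopf modules over a cosemisimple Hopf algebra (the "broader Hopf-algebraic results" alluded to), which is precisely the input that upgrades the purely formal coreflection of the previous paragraph to an honest equivalence onto the stated subcategory. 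This identification is the step I expect to be the main obstacle; everything before it is formal adjunction bookkeeping together with the single integral computation.
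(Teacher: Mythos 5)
Your proposal follows the paper's route step for step: reduce \Cref{item:th:desc1:gp} to \Cref{item:th:desc1:hopf} by setting $H=\cO(G)$, set up the standard adjunction, use the integral to show the unit is an isomorphism, deduce that $B\otimes_{B^H}\bullet$ is fully faithful and hence an equivalence onto its essential image, and finally identify that image with the stated subcategory. Where the paper simply cites \cite[\S 3 and Lemma 3.4]{zbMATH04208321} for the adjunction and for the invertibility of the unit, you prove both directly via the averaging operator $E$; that portion of your argument is complete and correct, and if anything more self-contained than the paper's.

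The step you single out as ``the main obstacle'' --- upgrading surjectivity of the counit $\varepsilon_M\colon B\otimes_{B^H}M^H\to M$ (i.e.\ the condition $BM^H=M$) to bijectivity --- is left open in your proposal, and it is also exactly the step the paper passes over in silence: its proof ends by asserting that the essential image is ``those specified in the statement'' with no supporting argument. Your caution is warranted, and neither of the escape routes you sketch closes the gap. Exactness of $(\bullet)^H$ only yields $(\ker\varepsilon_M)^H=0$, and a subobject of $B\otimes_{B^H}M^H$ in ${_B}\cM^H$ with vanishing coinvariants need not vanish. Concretely, take $G=\bG_m$, $H=\cO(G)=\Bbbk[t,t^{-1}]$ (cosemisimple), and $B=\Bbbk[x]$ graded with $x$ in degree $1$, so that $B^H=\Bbbk$; the one-dimensional module $M=B/(x)$, concentrated in degree $0$, satisfies $M^H=M$ and $BM^H=M$, yet $B\otimes_{B^H}M^H=\Bbbk[x]\not\cong M$ and $\varepsilon_M$ has kernel $x\Bbbk[x]$ (whose coinvariants indeed vanish). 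So cosemisimplicity alone does not make $BM^H=M$ cut out the essential image; some further input (faithful flatness of $B$ over $B^H$, a Galois-type hypothesis, or a reformulation of the right-hand category as ``objects for which $\varepsilon_M$ is bijective'') is required to finish along these lines. In short: your proposal is faithful to the paper's proof, and the one step you flag as substantive is a genuine gap --- one the paper shares.
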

\begin{proof}
  \Cref{item:th:desc1:hopf} specializes to \Cref{item:th:desc1:gp} at $H:=\cO(G)$, so we focus on the former.

  That the two functors depicted in \Cref{eq:desc1.hopf} constitute an adjunction between $_{B^H}\cM$ and $_{B}\cM^H$ is well known (\cite[\S 3]{zbMATH04208321} say); we denote it by $F\dashv G$ for brevity.

  \cite[Lemma 3.4]{zbMATH04208321} implies (given the assumed cosemisimplicity) that the \emph{unit} \cite[\S IV.1, post Theorem 1]{mcl_2e} $\id\to GF$ of that adjunction is a natural isomorphism. $F$ is thus \emph{fully faithful} by \cite[Proposition 3.4.1]{brcx_hndbk-1}, and the adjunction restricts to an equivalence between the domain $_{B^H}\cM$ of $F$ and the essential image of $F$. That image is nothing but the category of ${}_{B}\cM^H$-objects $N$ for which the counit $GF\to\ id$ is an isomorphism, i.e. those specified in the statement. 
\end{proof}


\begin{theorem}\label{th:desc2}
  For an affine $\Bbbk$-scheme $X$ acted upon by the linear algebraic $\Bbbk$-group $G$ and $x\in X(\Bbbk)$ with closed orbit $O_x=\mathrm{Spec}(R)$ and isotropy $G_x\le G$ the adjunction
\begin{equation}\label{eq:desc2}
\begin{tikzpicture}[auto,baseline=(current  bounding  box.center)]
  \node (1) at (0,0) {$\rep(G_x)$};
  \node (2) at (8,0) {${_R}\rep(G)$};
  \node (top) at (4,0) {$\top$};
  \draw[->] (1) .. controls (2,1) and (6,1) .. node{induction $G_x$-reps $\to$ $G$-reps} (2);
  \draw[<-] (1) .. controls (2,-1) and (6,-1) .. node[below] {fiber of $R$-module at $x\in\mathrm{Spec}(R)$} (2);
\end{tikzpicture}
\end{equation}
is an equivalence of categories.  
\end{theorem}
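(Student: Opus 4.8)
The plan is to identify ${_R}\rep(G)$ with the category of $G$-equivariant quasi-coherent sheaves on the homogeneous space $G/G_x\cong O_x=\Spec(R)$ and then to evaluate the latter by faithfully flat descent, exploiting that $G$ is honestly algebraic even though $X$ need not be.

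I would first record the geometry. The orbit map $G\to X$ through $x$ factors $G$-equivariantly as
\begin{equation*}
  G\ \twoheadrightarrow\ G/G_x\ \xrightarrow{\,\cong\,}\ O_x\ \hookrightarrow\ X,
\end{equation*}
carrying the identity coset to $x$. Since $O_x$ is closed in the affine $X$ it is affine, whence $R\cong\cO(G/G_x)=\cO(G)^{\,\mathrm{co}\,\cO(G_x)}\le\cO(G)$ and evaluation at $x$ is the restriction to $R$ of the counit of $\cO(G)$. Because $G$ is an affine algebraic group and $G_x\le G$ a closed subgroup scheme, Milne's theorem on quotients \cite[\S\S 5.c, 7.c]{mln_alg-gp_2017} applies directly --- no passage to finite-type approximations of $X$ being needed --- so the quotient map $q\colon G\to G/G_x$ is faithfully flat, and $(g,g')\mapsto(g,g^{-1}g')$ identifies $G\times_{G/G_x}G$ with $G\times G_x$, exhibiting $q$ as an fppf $G_x$-torsor.

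I would then descend twice. By faithfully flat descent for modules \spr{023N}, pullback along $q$ gives an equivalence of ${_R}\rep(G)$ --- $G$-equivariant quasi-coherent sheaves on $G/G_x$ --- with $(G\times G_x)$-equivariant quasi-coherent sheaves on $G$, with $G$ acting by left and $G_x$ by right translation (the $G_x$-equivariance being the descent datum along the torsor $q$). Next, the fundamental theorem of Hopf modules \cite[\S 1.9]{mont} trivialises the left $G$-action: a $G$-equivariant sheaf on $G$ is $\cO(G)\otimes W$ with $W$ its fibre at the identity, so $(G\times G_x)$-equivariant sheaves on $G$ are equivalent to $\rep(G_x)$, the residual (commuting) right $G_x$-action passing to a $G_x$-representation structure on $W$. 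Composing, ${_R}\rep(G)\simeq\rep(G_x)$. Unwinding: an object $M\in{_R}\rep(G)$ is sent to the fibre of $q^{*}M=\cO(G)\otimes_{R}M$ at the identity, namely $M\otimes_{R}\Bbbk=M/R^{+}M$ with $R^{+}=\ker(\varepsilon|_R)$, which is precisely the fibre of $M$ at $x$ together with its canonical $G_x$-action; and the quasi-inverse sends $V$ to the descent along $q$ of $\cO(G)\otimes V$, i.e.\ to $(\cO(G)\otimes V)^{G_x}=\cO(G)\,\Box_{\cO(G_x)}V$ --- the sections of the associated bundle $G\times^{G_x}V$, which is exactly the induction functor of \Cref{eq:desc2}. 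Being an equivalence, this is in particular an adjunction in either direction, as displayed in \Cref{eq:desc2}. In Hopf-algebraic language, the single step along $q$ is the structure theorem for relative Hopf modules over the faithfully flat $\cO(G_x)$-Hopf--Galois extension $R\subseteq\cO(G)$ \cite{mont,dnr} --- the ``broader'' fact alluded to in the introduction.

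I expect the main obstacle to be bookkeeping rather than mathematics: one must track the three interlocking structures --- the $\cO(G)$-comodule structure (left translation), the residual $\cO(G_x)$-coaction, and the $R$-module structure --- and check that the two descent equivalences are mutually compatible, so that their composite is genuinely the fibre-at-$x$ functor of \Cref{eq:desc2} and not merely some equivalence between the same two categories. The inputs themselves --- faithful flatness of $q$ and the torsor identification from \cite{mln_alg-gp_2017}, and the fundamental theorem of Hopf modules --- are entirely standard once one observes that $G$, in contrast to $X$, is a finite-type $\Bbbk$-scheme.
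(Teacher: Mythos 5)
Your argument is correct, but it reaches the equivalence by a genuinely different route from the paper. The paper's proof is essentially a citation: casting $G_x\le G$ as a Hopf quotient $\cO(G)\twoheadrightarrow\cO(G_x)$, it identifies \Cref{eq:desc2} with Takeuchi's cotensor/fiber adjunction for relative Hopf modules and then invokes \cite[Theorems 1, 2 and 10]{tak-ff}: the adjunction is an equivalence if and only if $\cO(G)$ is faithfully \emph{coflat} as an $\cO(G_x)$-comodule, and Takeuchi's Theorem~10 says this coflatness is precisely equivalent to the affineness of $G/G_x$, which the closed-orbit hypothesis supplies. You instead re-derive the equivalence from its geometric source: faithful \emph{flatness} of the torsor $q\colon G\to G/G_x$ (available from Milne's quotient theory because $G$, unlike $X$, is of finite type), faithfully flat descent of modules along $q$, and the fundamental theorem of Hopf modules to strip off the left translation action --- your closing remark correctly identifies this two-step descent as the Hopf--Galois/Schneider--Takeuchi statement the paper quotes in one step. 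Both arguments rest on the same geometric input, namely the identification $O_x\cong G/G_x$ with $R=\cO(G)^{\mathrm{co}\,\cO(G_x)}$ built into the paper's definition of closed orbit. What the paper's route buys is brevity and the fact that Takeuchi's formulation hands over the adjunction already in the stated (induction, fiber-at-$x$) form; what your route buys is self-containedness and an explicit description of both functors, at the price of the compatibility bookkeeping you flag --- that checking is real but routine, and is exactly what the cited ``post Proposition 1'' identification in \cite{tak-ff} packages away. The one point worth making explicit in your write-up is the orientation of the adjunction (which of the two functors is the left adjoint, per the paper's tail-of-$\top$ convention); you sidestep this by noting that an equivalence is an adjunction either way, which is legitimate but slightly glosses over what the symbol in \Cref{eq:desc2} asserts.
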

\begin{proof}
  Casting $G_x\le G$ as a Hopf quotient $\cO(G)\xrightarrowdbl{} \cO(G_x)$, the claimed adjunction becomes
  \begin{equation*}
    \bigg(-\square_{\cO(G_x)}\cO(G)\bigg)
    \quad
    \vdash
    \quad
    \bigg(\left(R/x\right)\otimes_R-\bigg)
    \quad
    \text{\cite[post Proposition 1]{tak-ff}},
  \end{equation*}
  `$\square$' denoting \emph{cotensoring} \cite[\S 10.1]{bwis}. The assumed orbit affineness is equivalent \cite[Theorem 10]{tak-ff} to the \emph{faithful coflatness} \cite[\S 10.9]{bwis} of $\cO(G)$ as a (left or right) $\cO(G_x)$-comodule, hence the equivalence by \cite[Theorems 1 and 2]{tak-ff}. 
\end{proof}

\section{Main results}\label{se:main}

\subsection{Simple modules}\label{subse.simp}

\Cref{th:main} classifies the simple finite-dimensional $\fM$-modules in terms of the fixed-point subalgebras $\fa^x\le \fa$ for $\Bbbk$-rational points $x\in X$. All conventions set out in \Cref{se:prel} are in place. Recall the evaluation maps $\cM\to\fa^x$; they induce restriction functors ${_{\fa^x}}\cM\to{_\fM}\cM$. 

Set $B:=A\otimes\fa$ so that $B^G=\fM$, and denote by $V$ a finite-dimensional $B^G$-module.


\begin{lemma}\label{le:supp1}
  If $V\in{_\fM}\cM_f$ is simple, then the support of $V'=B\otimes_{B^G}V$ as an $A$-module is a minimal closed $G$-invariant subset of $X$.
\end{lemma}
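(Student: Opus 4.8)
The plan is to analyze the $A$-module structure on $V' = B\otimes_{B^G}V$ via the orbit decomposition of $X$ and the $G$-equivariance of $V'$ (which it carries as an object of $_B\rep(G)$ by the descent equivalence in \Cref{th:desc1}\ref{item:th:desc1:gp}). First I would record that $\supp_A(V')$ is a closed $G$-invariant subset: closedness holds because $V'$ is finite-dimensional over $\Bbbk$ (so finitely generated over $A$, hence its support is closed), and $G$-invariance follows from the $G$-equivariance of the module structure map $B\otimes V'\to V'$ together with $G$-equivariance of $A\to B$. It therefore remains to show minimality among nonempty closed $G$-invariant subsets — equivalently, that $\supp_A(V')$ contains (the closure of) a unique orbit, and that orbit is closed.

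Next I would use simplicity of $V$. Under the equivalence of \Cref{th:desc1}\ref{item:th:desc1:gp}, $V'$ is a simple object of $_B\rep(G)$ (the equivalence $V\mapsto B\otimes_{B^G}V$ and its inverse $(\bullet)^G$ preserve sub-objects, since the unit and counit are isomorphisms), i.e. $V'$ has no nonzero proper $B$-submodule that is also a $G$-subcomodule. Now suppose $\supp_A(V')$ is not minimal: then it properly contains a nonempty closed $G$-invariant subset $Z$, and — shrinking $Z$ to a closed orbit using the fact that $G$-invariant closed sets are unions of orbits and a minimal nonempty closed $G$-invariant set is a single closed orbit (by Noetherianity on each finite-type quotient $X_i$ and the limit description of orbits from \Cref{se:prel}) — I may take $Z$ to be a closed orbit strictly inside $\supp_A(V')$. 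Consider the $A$-submodule $\Gamma_Z(V') := \{v\in V' : \supp_A(Av) \subseteq Z\}$ (sections supported on $Z$), or more robustly the submodule $I_Z\cdot V'$ where $I_Z\trianglelefteq A$ is the (radical, $G$-stable) ideal of $Z$. Both are $G$-stable because $Z$ and $I_Z$ are $G$-stable, and both are $B$-submodules since $A$ is central-ish inside $B = A\otimes\fa$ — more precisely the $A$-action on $V'$ commutes with the $\fa$-action, as $A\otimes\fa$ has $A$ and $\fa$ as commuting subalgebras. One of these is a nonzero proper $G$-stable $B$-submodule, contradicting simplicity of $V'$.

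The cleanest route for the last step: decompose $X$ scheme-theoretically near $\supp_A(V')$. Since $V'$ is finite-dimensional, $\supp_A(V')$ is a finite union of closed orbits' closures; if there were two distinct such orbit-closures, or a closed orbit properly inside another component, one produces a nontrivial idempotent or a nonzero proper $G$-stable $A$-submodule (hence $B$-submodule) by the Chinese Remainder decomposition of $A/\mathrm{Ann}_A(V')$ along the connected/orbit-closure components — again contradicting simplicity via the equivalence. So $\supp_A(V')$ is the closure of a single orbit; and it must be a \emph{closed} orbit, because the closure of any orbit contains a closed orbit (orbits of minimal dimension in an orbit closure are closed, \cite[Proposition 7.c]{mln_alg-gp_2017}-style, transported to the limit), and if that closed orbit were strictly smaller than $\supp_A(V')$ we would again get a proper nonzero $G$-stable $B$-submodule $I_Z V'$.

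\textbf{Main obstacle.} The delicate point is the passage between "$G$-invariant closed subset" and "closed orbit" in the possibly-non-finite-type setting: I need that a minimal nonempty closed $G$-invariant subset of $X$ is exactly a closed orbit, and that $\supp_A(V')$ — being closed and carved out by a finitely generated ideal — descends to some finite-type quotient $X_i$ where the orbit theory of \Cref{se:prel} applies directly. Managing this limit argument (choosing $i$ large enough that $\supp_A(V')$, the relevant orbits, and the isotropy groups all stabilize) is the part requiring care; the module-theoretic contradiction with simplicity of $V'$ is then formal.
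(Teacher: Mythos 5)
Your core idea for the minimality step---choose a $G$-invariant ideal $I_Z$ cutting out a nonempty proper closed $G$-invariant subset $Z$ of the support and use $I_ZV'$ (equivalently the quotient $V'/I_ZV'$) to contradict simplicity of $V'$ in the descent category of \Cref{th:desc1}---is exactly the paper's argument. But there are genuine gaps. First, $V'$ is \emph{not} finite-dimensional over $\Bbbk$ in general: $B=A\otimes\fa$ is typically infinite-dimensional and so is $B\otimes_{B^G}V$. What is true, and what you actually need for closedness of the support, is that $V'$ is finitely generated over $A$: $V$ is finite-dimensional, hence finitely generated over $B^G$, hence $V'$ is finitely generated over $B$, hence over $A$ because $\fa$ is finite-dimensional. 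Your later assertion that ``$\supp_A(V')$ is a finite union of closed orbits' closures'' rests on the same false premise and should be dropped.

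Second---and this is the substantive gap---you never verify that $I_ZV'$ is both nonzero and proper; you write that ``one of these is a nonzero proper $G$-stable $B$-submodule'' and call the rest formal. It is not: the candidate $\Gamma_Z(V')$ can perfectly well vanish (e.g.\ for a torsion-free module on an irreducible support), so it gives nothing, and for $I_ZV'$ two separate localization arguments are required, which are the actual content of the lemma. Properness $I_ZV'\neq V'$ follows by localizing at a prime $\fp\in Z$ and applying Nakayama to the nonzero finitely generated $A_\fp$-module $V'_\fp$ (finite generation over $A$ is used again here); nonvanishing $I_ZV'\neq 0$ holds because otherwise $\supp_A(V')\subseteq V(I_Z)=Z$, contradicting $Z\subsetneq\supp_A(V')$---concretely, localize at some $\fp\in\supp_A(V')\setminus Z$. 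Finally, you are proving more than the statement asks: the lemma claims minimality among nonempty closed $G$-invariant subsets, not identification with a closed orbit; the latter is \Cref{le:orbit} and is precisely where the finite-type limit argument you flag as the ``main obstacle'' belongs. Your claimed equivalence between minimality and being a single closed orbit is moreover false as stated: as the paper remarks right after this lemma, a minimal closed $G$-invariant set may contain no $\Bbbk$-rational points at all, so it need not a priori be an orbit. For this lemma you can, and should, stop at minimality.
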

\begin{proof}
  Since $V$ is finite-dimensional, it is finitely generated over $B^G$. This means that $V'$ is finitely generated over $B=A\otimes\fa$, and hence over $A$ (because $\fa$ is finite-dimensional). Its support must then be closed \cite[Chapter 1, Exercise (2)]{mtsm_ca_2e_1980}, and it is in any case $G$-invariant. 

  An application of \Cref{th:desc1} shows that $V'$ is a simple object in the category $\cC$ on the right hand side of \Cref{eq:desc1}. If $\supp_A(V')$ is not a \define{minimal} closed $G$-invariant subset, then we can find a proper, closed, $G$-invariant subset $Z\subset \supp(V')$ corresponding to some $G$-invariant ideal $I\trianglelefteq A$. The quotient
  \begin{equation*}
    V''
    :=
    (A/I)\otimes_AV'
    =
    V'/IV'
  \end{equation*}
  in $\cC$ is either trivial or full by simplicity, and we have a contradiction:
  \begin{itemize}[wide]
  \item $V''$ cannot vanish unless $V'$ does (and with it also $V$ by \Cref{th:desc1}, in which case there is nothing to prove) by \emph{Nakayama} \spr{07RC} upon localizing at some prime $\fp\in Z$;

  \item while on the other hand $IV'$ cannot vanish: if it did, localization at some $\fp\in \supp(V')\setminus Z$ would annihilate $V'$.
  \end{itemize}
\end{proof}

Minimal closed $G$-sets might of course, in principle, contain no $\Bbbk$-rational points (e.g. $G$ might be trivial with $A$ an infinite field extension of $\Bbbk$). For the supports of \Cref{le:supp1} this is ruled out by the following observation.

\begin{lemma}\label{le:orbit}
  If $V\in{_\fM}\cM_f$ is simple, then the $A$-support of $V'=B\otimes_{B^G}V$ is a closed $G$-orbit in $X$. 
\end{lemma}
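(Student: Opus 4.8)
By \Cref{le:supp1} we already know that $Z:=\supp_A(V')$ is a minimal closed $G$-invariant subset of $X$; the goal is to upgrade ``minimal closed $G$-invariant'' to ``closed orbit'', i.e. to locate a $\Bbbk$-point $x\in Z$ whose orbit $O_x$ is closed and equals $Z$. The plan is to exploit that $V'$ is not merely a $B$-module but an object of ${_B}\rep(G)$ lying in the essential image of the descent equivalence \Cref{eq:desc1}, so that its support carries genuine $G$-equivariant structure rather than just $G$-invariance.

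First I would reduce to the finite-type situation. Since $V$ is finite-dimensional, only finitely many ``coordinates'' are involved: writing $A=\varinjlim_i A_i$ as in \Cref{se:prel}, the $B^G$-action on $V$ and the comodule/module data on $V'$ factor through $A_i\otimes\fa$ for some sufficiently large $i$, so that $V'=(A_i\otimes\fa)\otimes_{(A_i\otimes\fa)^G}V$ and $Z$ is the preimage of a closed $G$-invariant subset $Z_i\subseteq X_i:=\spec(A_i)$ under the (affine, $G$-equivariant) projection $X\to X_i$. It therefore suffices to produce a closed $G$-orbit inside the \emph{algebraic} scheme $X_i$; the orbit pulls back to a closed orbit in $X$ (using smoothness of $G$ and \spr{0CUH} as recorded in the preliminaries), and since $Z$ is minimal this pulled-back orbit must be all of $Z$.

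Next, working over the algebraic $G$-scheme $X_i$: every nonempty closed $G$-invariant subset of a finite-type $\Bbbk$-scheme with $G$-action contains a closed orbit. Concretely, $Z_i$ contains a closed point (it is of finite type over the algebraically closed $\Bbbk$, so closed points are $\Bbbk$-rational), and among all orbits meeting $Z_i$ one of minimal dimension is closed: by \cite[Proposition 7.17]{mln_alg-gp_2017} the boundary $\overline{O_y}\setminus O_y$ of any orbit is a union of strictly lower-dimensional orbits, so an orbit of minimal dimension inside the closed set $Z_i$ has empty boundary, hence is closed. This is the standard orbit-closure argument; it gives a closed $G$-orbit $O_x\subseteq Z_i$ for some $x\in X_i(\Bbbk)$. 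Then $O_x$, being closed, $G$-invariant, and contained in the minimal closed $G$-invariant set $Z_i$, equals $Z_i$; pulling back along $X\to X_i$ gives $Z=O_x$ (reinterpreted as the orbit of the corresponding $\Bbbk$-point of $X$), which is what we want.

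The main obstacle is making sure the reduction to finite type is clean — that there really is a single index $i$ through which both the module structure on $V$ and the support computation factor, and that ``minimal closed $G$-invariant'' is preserved under the affine base change $X\to X_i$ (so that the orbit found upstairs cannot be a proper subset of $Z$). Everything else — existence of a closed orbit in a nonempty closed invariant subset of an algebraic $G$-scheme, and the passage back to the pro-algebraic $X$ via \spr{0CUH} — is either classical or already set up in \Cref{se:prel}. Note that $\Bbbk$-rationality of the resulting point is automatic here precisely because we have manufactured it as a closed point of a finite-type $\Bbbk$-scheme, which is the content of the remark preceding the lemma.
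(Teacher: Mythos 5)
Your overall strategy (pass to finite type, find a closed orbit there by the minimal-dimension argument, conclude by minimality) has the right general shape, but the reduction to a \emph{single} finite level is where the argument breaks. Writing $B_i:=A_i\otimes\fa$, the equality $V'=(A_i\otimes\fa)\otimes_{(A_i\otimes\fa)^G}V$ and the ensuing claim that $Z=\supp_A(V')$ is the \emph{preimage} of a closed $G$-invariant $Z_i\subseteq X_i$ are false: one only has a surjection $B\otimes_{B_i^G}V\to B\otimes_{B^G}V$, so $Z$ is merely \emph{contained in} $\pi_i^{-1}(Z_i)$ and is in general much smaller. Already for trivial $G$, $\fa=\Bbbk$ and $A=\Bbbk[x_1,x_2,\dots]$, a one-dimensional simple module has support a single maximal ideal, while the preimage of its level-$i$ support is an infinite-dimensional affine subspace; note also that $\pi_i^{-1}(Z_i)$ is then not minimal, so the claimed equality would contradict \Cref{le:supp1}. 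Consequently the closed orbit $O_{x_i}\subseteq Z_i$ you produce, with $x_i\in X_i(\Bbbk)$, does not ``pull back'' to a closed orbit of $X$: the preimage of an orbit is not an orbit, and \spr{0CUH} as set up in \Cref{se:prel} applies to a compatible tower of closed orbits through a $\Bbbk$-point of $X$ itself, which you do not yet have. This is precisely the difficulty the lemma exists to address (cf.\ the remark preceding it): a minimal closed $G$-invariant subset of the pro-algebraic $X$ could a priori contain no $\Bbbk$-rational point, and finding $\Bbbk$-points at each finite level separately does not by itself produce one upstairs.

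The paper closes this gap by working with the entire inverse system at once: setting $O_i:=\supp_{A_i}(B_i\otimes_{B_i^G}V)$ (each a closed orbit by essentially your finite-type argument), it proves the compatibility $\pi_{ji}(O_j)=O_i$ for the transition maps, identifies $O_i\cong G/H_i$, stabilizes the $H_i$ via the descending chain condition so that the maps $O_j\to O_i$ are eventually isomorphisms, and only then invokes \spr{0CUH} to realize $O=\varprojlim_i O_i\cong G/H$ as the closed orbit of an honest $\Bbbk$-point of $X$; a final localization argument ($V'_{\fp}\cong\varinjlim_i(V'_i)_{\fp_i}$) shows $\supp_A(V')\subseteq O$, whence equality. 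To repair your proof you would need to supply some version of this cross-level compatibility --- for instance that the supports at successive levels map onto one another and that the resulting tower of orbits stabilizes --- rather than working at one level in isolation.
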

\begin{proof}
  Most of what is required already effectively features in the discussion of orbits preceding \Cref{th:desc1}. Write once again
  \begin{equation*}
    X=\varprojlim_i \left(X_i:=\spec\left(A_i\right)\right)
    ,\quad
    A=\bigcup^{\text{filtered union}}_i \left(\text{finitely-generated $G$-invariant }A_i\right),
  \end{equation*}
  ordering $i\le j$ by inclusion $A_i\le A_j$. As $V$ will be simple over $B_i^G$, $B_i:=A_i\otimes \fa$ for sufficiently large $i$, we assume for simplicity that this is the case for all $i$.

  Applying \Cref{le:supp1} at the individual $i$ to $A_i$,
  \begin{equation*}
    O_i
    :=
    \supp_{A_i}\left(V_i':=B_i\otimes_{B_i^G}V\right)
    \subseteq
    \spec(A_i)
  \end{equation*}
  is minimal closed $G$-invariant. $A_i$ being of finite type, $O_i$ must be a closed $G$-orbit (as follows from \cite[Proposition 7.5((b))]{mln_alg-gp_2017} for instance, given the fact that non-empty finite-type $\Bbbk$-schemes have $\Bbbk$-points). I next claim that
  \begin{equation}\label{eq:trnstn}
    \forall\left(i\le j\right)
    \ :\ 
    \pi_{ji}\left(O_{j}\right)
    =
    O_i
    \quad\text{for}\quad
    X_j
    \xrightarrow[\quad\text{transition map}\quad]{\quad\pi_{ji}\quad}
    X_i.
  \end{equation}
  Indeed, it suffices to argue that $\pi_{ji}$ maps maximal ideals $\fm\in O_j$ into $O_i$. This, in turn, follows from the observation that the canonical transition map
  \begin{equation*}
    B_i\otimes_{B_i^G}V
    =
    V'_i
    \xrightarrow{\quad}
    V'_j
    =
    B_j\otimes_{B_j^G}V
  \end{equation*}
  becomes a surjection after respectively quotienting out the kernels of the morphisms $B_i\xrightarrowdbl{}\fa$ induced by $\fm$, and hence if $V'_j$ is not annihilated by that procedure then neither is $V'_i$.
  
  Now, \cite[Proposition 7.12]{mln_alg-gp_2017} respectively identifies $O_i$ with quotients $G/H_i$ for algebraic subgroups $H_i\le G$. Being non-increasing with $i\uparrow$ by \Cref{eq:trnstn}, the $H_i$ stabilize \cite[Corollary 1.42]{mln_alg-gp_2017} to some $H\le G$ and the (co)restrictions $O_j\xrightarrow{\pi_{ji}}O_i$ are isomorphisms for large $i\le j$. This realizes the limit
  \begin{equation*}
    O
    :=
    \varprojlim_i O_i
    \lhook\joinrel\xrightarrow[\quad\text{closed immersion: \spr{0CUH}}\quad]{\quad}
    X=\varprojlim_i X_i
  \end{equation*}
  as a closed $G$-orbit $\cong G/H$ in $X$. 

  To conclude, observe that the minimal $G$-invariant set $Y:=\supp_A(V':=B\otimes_{B^G}V)$ is contained in $O$ (so must coincide with it): for $\fp\in Y$ we have
  \begin{equation*}
    V'_{\fp}
    \cong
    \varinjlim_i
    \left(V'_i\right)_{\fp_i}
    ,\quad
    \left(
      X\ni \fp
      \xmapsto[\quad\text{canonical limit structure map}\quad]{\quad}
      \fp_i\in X_i,
    \right)
  \end{equation*}
  so that $\left(V'_i\right)_{\fp_i}$ must be non-zero for large $i$ if $V'_{\fp}$ is. 
\end{proof}

\begin{remark}
  \Cref{le:orbit} is analogous to \cite[Proposition 5.2]{NehSavSen12}, which proves essentially the same thing for finite $G$ (but not necessarily associative $\fa$). In that case we have at our disposal the result that the fibers of the map $\spec(A)\to\spec(A^G)$ are $G$-orbits; this is more problematic for positive-dimensional $G$. 
\end{remark}

For a point $x\in X$ with closed $G$-orbit $O_x$ let $A_x=\cO(O_x)$ and $B_x=A_x\otimes\fa$. Before moving on to the formal proof of \Cref{th:main}, it might be helpful to note that schematically, the argument moves between the various categories introduced above as indicated in the following diagram:

\begin{equation*}
  \begin{tikzpicture}[auto,baseline=(current  bounding  box.center)]
    \node (1) at (0,0) {${_{B^G}}\cM$};
    \node (2) at (2,2) {${_B}\rep(G)$};
    \node (3) at (6,3) {$\bigoplus_x ({_{B_x}}\rep(G))$};
    \node (4) at (10,2) {$\bigoplus_x ({_\fa}\rep(G_x)$)};
    \node (5) at (12,0) {${_{\fa^x}}\cM$};
    \draw[<->,bend left=15] (1) to node{\Cref{th:desc1}} (2);
    \draw[<->,bend left=10] (2) to node{\Cref{le:orbit}} (3);
    \draw[<->,bend left=10] (3) to node{\Cref{th:desc2}} (4);
    \draw[<->,bend left=15] (4) to node{\Cref{th:desc1}} (5);
    \draw[<->,bend right=05] (1) to (5);
  \end{tikzpicture}
\end{equation*}

\pf{th:main}
\begin{th:main}
For a $\Bbbk$-point $x\in X$ with closed orbit $O_x$ let $\cC_x$ be the full subcategory of ${_B}\rep(G)$ consisting of objects $M$ supported on $O_x$ such that $M^G$ is finite-dimensional and $BM^G=M$.  

According to \Cref{th:desc1} and \Cref{le:orbit} we have a bijection $B\otimes_{B^G}\bullet$ between the (isomorphism classes of) simples in $_{\fM}\cM_f$ and those in the direct sum $\bigoplus_x \cC_x$ (or equivalently in the direct product $\prod_x \cC_x$) for $x$ ranging over any set containing exactly one $\Bbbk$-point from each closed $G$-orbit in $X$. 

Set $H=G_x$, the isotropy group of the $\Bbbk$-point $x\in X$ (whose orbit is assumed to be closed, so that $H$ is again linearly reductive). I now claim that taking the fiber at $x$ produces a bijection between the (isomorphism classes of) simple objects in $\cC_x$ and those in the full subcategory $\cD_x$ of ${_\fa}\rep(H)$ consisting of objects $N$ supported on the orbit $O_x$ with finite-dimensional $N^{H}$ and such that $\fa N^{H}=N$. 

Assuming the claim for now, we can finish the proof of the theorem by applying \Cref{th:desc1} once more to conclude that $(\bullet)^H$ identifies the simples of $\cD_x$ with those of ${_{\fa^x}}\cM_f$. We leave it to the reader to confirm that the identifications we have made are compatible with \Cref{eq:main}. 

It remains to prove the claim. Note first that a simple object in $\cC_x$ is actually a module over the reduced ring $A_x=\cO(O_x)$ (else tensoring with $A_x$ would produce a proper non-zero quotient). Hence, the simples of $\cC_x$ coincide with those in the category of $B_x$-modules $M$ in $\rep(G)$ for which (a) $M^G$ is finite-dimensional and (b) $B_xM^G=M$. The claim now follows from the next lemma applied to $R=A_x=\cO(O_x)$.  
\end{th:main}

\begin{lemma}\label{le:main_claim}
In the setting of \Cref{th:desc2}, let $\fa\in\rep(G)$ be an algebra. Then, the equivalence \Cref{eq:desc2} specializes to an equivalence 
\begin{equation}\label{eq:main_claim}
\begin{tikzpicture}[auto,baseline=(current  bounding  box.center)]
  \node[text width=2.8cm] (1) at (0,0) {$N\in {_\fa}\rep(G_x)$, $\dim(N^{G_x})<\infty$, $\fa N^{G_x}=N$};
  \node[text width=3cm] (2) at (10,0) {$M\in {_{R\otimes\fa}}\rep(G)$, $\dim(M^G)<\infty$, $(R\otimes\fa)M^G=M$};
  \node (top) at (5,0) {$\top$};
  \draw[->] (1) .. controls (2,1) and (8,1) .. node{induction from $G_x$-reps to $G$-reps} (2);
  \draw[<-] (1) .. controls (2,-1) and (8,-1) .. node[below] {fiber of $R$-module at $x\in\mathrm{Spec}(R)$} (2);
\end{tikzpicture}
\end{equation}
between full subcategories of ${_\fa}\rep(G_x)$ and ${_{R\otimes\fa}}\rep(G)$ respectively.  
\end{lemma}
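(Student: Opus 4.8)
The plan is to derive \Cref{eq:main_claim} from \Cref{th:desc2} as a restriction to full subcategories, so the bulk of the work is checking that the adjoint equivalence of \Cref{eq:desc2} carries the three conditions ``$N\in{_\fa}\rep(G_x)$ with $\dim N^{G_x}<\infty$ and $\fa N^{G_x}=N$'' on the left-hand side bijectively to the corresponding conditions on the right. First I would note that both the induction functor $-\,\square_{\cO(G_x)}\cO(G)$ and the fiber functor $(R/x)\otimes_R-$ are (lax, and in fact strong once one is careful) monoidal for the relevant tensor structures: induction is monoidal because cotensoring with the coflat $\cO(G)$ commutes with the tensor product of $\cO(G_x)$-comodules, and the fiber at $x$ is plainly monoidal. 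Consequently each functor sends an algebra to an algebra and a module over an algebra to a module over the image algebra; applying this with the fixed algebra $\fa\in\rep(G)$ (whose image under the fiber functor is $\fa$ itself, and whose ``pullback'' along $O_x\hookrightarrow \spec R$ is $R\otimes\fa$) gives the two categories ${_\fa}\rep(G_x)$ and ${_{R\otimes\fa}}\rep(G)$, and the equivalence of \Cref{th:desc2} upgrades to an equivalence between them. This is the standard ``an equivalence of monoidal categories induces an equivalence of module categories over corresponding algebras'' principle, so I would cite it rather than reprove it.

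Next I would track the finiteness and generation conditions across the equivalence. For the fixed-point conditions, the key point is that both functors intertwine the invariants functors in the appropriate sense: for $N\in\rep(G_x)$ the induced $G$-representation $N\,\square_{\cO(G_x)}\cO(G)$ has $G$-invariants canonically isomorphic to $N^{G_x}$ (this is Frobenius reciprocity / the defining adjunction of \Cref{th:desc2} applied to the trivial representation), and for $M\in\rep(G)$ the fiber $(R/x)\otimes_R M$ has $G_x$-invariants matching $M^G$ — indeed under the equivalence these invariants spaces are literally identified. So $\dim N^{G_x}<\infty$ on the left corresponds exactly to $\dim M^G<\infty$ on the right. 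For the generation conditions, $\fa N^{G_x}=N$ is the statement that the multiplication map $\fa\otimes N^{G_x}\to N$ is surjective; applying the (exact, monoidal) induction functor and using that it commutes with forming the image of this map, together with the identification of invariants just discussed, converts it into $(R\otimes\fa)M^G=M$, and conversely using the fiber functor. Here I would use that both functors are exact — induction because $\cO(G)$ is faithfully coflat over $\cO(G_x)$ by the closed-orbit (affineness) hypothesis, as already invoked in the proof of \Cref{th:desc2}, and the fiber functor because on the essential image it is (quasi-)inverse to an exact functor.

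The main obstacle I anticipate is the bookkeeping around the support condition, which appears in the ambient categories $\cC_x$ and $\cD_x$ of the proof of \Cref{th:main} but is \emph{absorbed} in the statement of \Cref{le:main_claim}: there $R$ is already $\cO(O_x)$, so every $R$-module is automatically supported on $O_x$, and on the $G_x$-side the analogous constraint is vacuous since $O_x$ corresponds to the single point $x\in\spec R$. Thus I would make explicit at the outset that because $R=\cO(O_x)$ is reduced with underlying space the orbit, the ``supported on $O_x$'' clauses of $\cC_x$ and $\cD_x$ are automatic, which is precisely why the lemma can be stated without them; this is the subtle point the reader of \Cref{th:main} is implicitly asked to absorb. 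A secondary, purely technical nuisance is verifying the monoidality of induction carefully (the comparison morphism $(N_1\square_{\cO(G_x)}\cO(G))\otimes(N_2\square_{\cO(G_x)}\cO(G))\to (N_1\otimes N_2)\square_{\cO(G_x)}\cO(G)$ being an isomorphism), but faithful coflatness of $\cO(G)$ makes this routine, so I would state it and refer to \cite{tak-ff} or \cite{bwis} rather than expand it. Everything else is a diagram chase using that equivalences of categories reflect and preserve finiteness of $\Hom$/invariant spaces and epimorphisms.
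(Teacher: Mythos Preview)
Your proposal is correct and follows essentially the same approach as the paper: observe that \Cref{eq:desc2} is a monoidal equivalence (tensor over $\Bbbk$ on the $G_x$-side, over $R$ on the $G$-side), lift it to an equivalence ${_\fa}\rep(G_x)\simeq{_{R\otimes\fa}}\rep(G)$, identify $M^G\cong N^{G_x}$ via the adjunction, and then match the generation conditions. The only cosmetic difference is that for the implication $\fa N^{G_x}=N\Rightarrow (R\otimes\fa)M^G=M$ the paper phrases the argument as ``$(R\otimes\fa)M^G\le M$ is a subobject with the same $x$-fiber, hence equal by the equivalence'' rather than your ``exact functors preserve surjections''; these are the same observation. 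Your paragraph on the support condition is commentary on how the lemma is invoked in the proof of \Cref{th:main} rather than on the lemma itself, but it is accurate.
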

\begin{proof}
  Note first that the equivalence \Cref{eq:desc2} is one of symmetric monoidal categories, where the monoidal structures are the obvious ones (tensoring over $\Bbbk$ on the left and over $R$ on the right in \Cref{eq:desc2}). Since $R\otimes\fa$ is an algebra in ${_R}\rep(G)$ whose image in $\rep(G_x)$ is $\fa$, \Cref{eq:desc2} lifts to an equivalence
  \begin{equation}\label{eq:main_claim_bis}
    \begin{tikzpicture}[auto,baseline=(current  bounding  box.center)]
      \node (1) at (0,0) {${_\fa}\rep(G_x)$};
      \node (2) at (6,0) {${_{R\otimes\fa}}\rep(G)$};
      \node (top) at (3,0) {$\top$};
      \draw[->] (1) .. controls (2,1) and (4,1) .. node{} (2);
      \draw[<-] (1) .. controls (2,-1) and (4,-1) .. node[below] {} (2);
    \end{tikzpicture}
  \end{equation}
  Mapping the object $M$ on the right hand side of \Cref{eq:main_claim_bis} canonically onto its $x$-fiber $N=M\otimes_{R}(R/x)$ identifies $M^G$ and $N^{G_x}$, which implies that the two finiteness conditions in \Cref{eq:main_claim} do indeed coincide.

  Finally, we have to verify that if $M$ on the right hand side of \Cref{eq:main_claim} corresponds to $N$ on the left hand side, then $\fa N^{G_x}=N$ is equivalent to $(R\otimes\fa)M^G=M$. 

  \begin{itemize}[wide]
  \item On the one hand, tensoring $(R\otimes\fa)M^G=M$ with $R/x$ produces $\fa N^{G_x}=N$ (recall that $M^G\cong N^{G_x}$).

  \item Conversely, suppose $\fa N^{G_x}=N$. Then, $(R\otimes\fa)M^G$ is a subobject of $M$ in ${_{R\otimes\fa}}\rep(G)$ whose $x$-fiber is again $N$. But since \Cref{eq:main_claim_bis} is an equivalence, the inclusion $(R\otimes\fa)M^G\le M$ must be an equality.
  \end{itemize}
\end{proof}

\subsection{Arbitrary modules}\label{subse.arb}

According to \Cref{le:orbit} the closed $G$-orbits in $X$ naturally label the simple objects in $\cM:={_\fM}\cM_f$ can be labeled with closed $G$-orbits in $X$. \Cref{th:direct_sum} shows that this labeling can be extended to a direct sum decomposition of the entire category. 

\begin{remark}\label{re.disjoint_images}
  An object $M\in\cM$ is in $\cM_x$ if and only if it is supported on the image $\overline{x}$ of $x$ through $X\to X/G$. 

  Note that the relevant $\Bbbk$-points of $X/G$, i.e. those which are images of closed orbits in $X$, are in bijection with these orbits. To see this, consider two distinct (and hence disjoint) closed $G$-orbits $O_x$ and $O_y$ in $X$. Let $Z=O_x\sqcup O_y$ be the reduced closed subscheme, and $\overline{A}=\cO(Z)$ the corresponding quotient of $A$. By linear reductivity, $A^G\to\overline{A}^G$ is onto. This implies that the lower right hand arrow in      
  \begin{equation*}
    \begin{tikzpicture}[auto,baseline=(current  bounding  box.center)]
      \node (1) at (0,0) {$Z$};
      \node (2) at (2,.5) {$X$};
      \node (3) at (2,-.5) {$Z/G$};
      \node (4) at (4,0) {$X/G$};
      \draw[->,bend left=7] (1) to (2);
      \draw[->,bend left=7] (2) to (4);
      \draw[->,bend right=7] (1) to (3);
      \draw[->,bend right=7] (3) to (4); 
    \end{tikzpicture}
  \end{equation*}
  is one-to-one. Since the lower corner of the diagram is a two-point scheme, we are done. 
\end{remark}

\pf{th:direct_sum}
\begin{th:direct_sum}
  By \Cref{le:orbit} we know that every simple is an object of one of the categories $\cM_x$. Since every object $M$ in $\cM$ is a successive extension of simples, we will be done if we show that there are no non-trivial extensions between simple objects $M,N$ with $B\otimes_{B^G}M$ and $B\otimes_{B^G}N$ supported on different closed orbits $O_x$ and $O_y$ respectively.   

  We have to prove that $\mathrm{Ext}:=\mathrm{Ext}^1_{B^G}(M,N)$ vanishes. Let $\overline{x}$ and $\overline{y}$ be the images of $x$ and $y$ respectively in $X/G=\mathrm{Spec}(A^G)$. They are the supports of $M$ and $N$, and by \Cref{re.disjoint_images} they are distinct. Hence, we can find $f\in A^G$ belonging to the maximal ideal $\overline{y}$ but not to $\overline{x}$.     

  Note that $\mathrm{Ext}$ is acted upon naturally by $A^G$ via its action on either $M$ or $N$. On the one hand the action of $f$ on $N$ is zero, so $f$ annihilates $\mathrm{Ext}$. On the other hand, I claim that $f$ acts as an isomorphism on $M$ and hence on $\mathrm{Ext}$, proving that the latter vanishes. 

  We are left having to check the claim. The annihilator of $f$ in $M$ is an $A^G$-submodule supported on a set strictly smaller than the singleton $\overline{x}$ (because $f\not\in \overline{x}$), which means that the action of $f$ on $M$ is one-to-one; $M$ being finite-dimensional, $f:M\to M$ is also onto. 
\end{th:direct_sum}


\addcontentsline{toc}{section}{References}

\Addresses

\end{document}